\newtheorem{thm}{Theorem}[section]
\newtheorem{lem}[thm]{Lemma}
\newtheorem{prop}[thm]{Proposition}
\theoremstyle{definition}
\newtheorem*{xrem}{Remark}
\def\scr{\mathcal}
\def\e{\varepsilon}
\def\<{\langle}
\def\>{\rangle}
\def\R{\mathbb R}
\def\N{\mathbb N}
\def\e{\varepsilon}
\def\Var{\operatorname{Var}}
\begin{document}


\baselineskip=17pt
\sloppy

\title[LLN for Nonmeasurable RVs]{On the Law of Large Numbers for Nonmeasurable Identically Distributed Random Variables}
\author{Alexander R. Pruss}
\address{Department of Philosophy\\
Baylor University\\
One Bear Place \#97273\\
Waco, TX 76798-7273, USA}
\email{alexander\_pruss@baylor.edu}
\subjclass[2010]{60F10, 60F05, 28A12}
\keywords{law of large numbers, measurability, nonmeasurable random variables, probability}
\thanks{
{\em Bulletin of the Polish Academy of Sciences, Mathematics} (forthcoming), published by the Institute of Mathematics, Polish Academy of Sciences}


\begin{abstract}
Let $\Omega$ be a countable infinite product $\Omega^\N$ of copies of the same probability space $\Omega_1$, and let $\{ \Xi_n \}$ be the sequence of the coordinate projection functions from $\Omega$ to $\Omega_1$.  Let $\Psi$ be a possibly nonmeasurable function from $\Omega_1$ to $\R$, and let
$X_n(\omega) = \Psi(\Xi_n(\omega))$.  Then we can think of $\{ X_n \}$ as a sequence of independent but possibly nonmeasurable random variables on $\Omega$.
Let $S_n = X_1+\cdots+X_n$.  By the ordinary Strong Law of Large Numbers, we almost surely have $E_*[X_1] \le \liminf S_n/n \le \limsup S_n/n \le E^*[X_1]$, where $E_*$ and $E^*$ are the lower and upper expectations.  We ask if anything more precise can be said about the limit points of $S_n/n$ in the non-trivial
case where $E_*[X_1] < E^*[X_1]$, and obtain several negative
answers.  For instance, the set of points of $\Omega$ where $S_n/n$ converges is maximally nonmeasurable: it has inner measure zero and outer measure one.
\end{abstract}

\maketitle
\section{Introduction}
Ordinary random variables are $P$-measurable functions on a probability space $(\Omega,\scr F,P)$, where $\scr F$ is a $\sigma$-field on $\Omega$.
By the ordinary Strong and Weak Laws of Large Numbers (LLNs), if
$X_1,X_2,...$ are measurable identically distributed random variables with finite expectation, then
$(X_1+\cdots+X_n)/n \to E[X_1]$ almost surely (Strong Law) and in probability (Weak Law).  But we can also ask what happens to long-term means of samples when the random variables are not measurable.

Let $(\Omega,\scr F,P)$ be a probability space.  The following collects some known facts (see, e.g., \cite[Lemmas 1.2.2 and 1.2.3]{VW}) that allow
us to apply probabilistic techniques in the case of nonmeasurable random variables.

\begin{prop}\label{prop:L-U} Let $H$ be any subset of $\Omega$.  Then there are measurable sets $H_*$ and $H^*$ such that $H_* \subseteq H \subseteq H^*$ and
such that for any measurable $A\subseteq H$ we have $P(A) \le P(H_*)$ and for any measurable $B \supseteq H$ we have $P(B) \ge P(H^*)$.  The
sets $H_*$ and $H^*$ are uniquely defined up to sets of measure zero.

For any real-valued function $f$ on $\Omega$, there are measurable functions $f_*$ and $f^*$ such that $f_* \le f \le f^*$ everywhere and for any measurable $g$ on $\Omega$ such that $g\le f$ everywhere, we have $g\le f_*$ almost surely, while for any measurable $h$ on $\Omega$ such that $f \le h$ everywhere, we have $h\ge f^*$ almost surely.  The functions $f_*$ and $f^*$ are uniquely defined up to almost sure equality.
\end{prop}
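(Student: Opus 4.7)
The plan is to prove the set statement first, and then bootstrap it to the function statement by applying the set construction to the family of level sets $\{\{f > r\}\}_{r \in \Q}$. For the set version, I would define $P^*(H) := \inf\{P(B) : B \in \scr F,\ B \supseteq H\}$, choose $B_n \in \scr F$ with $B_n \supseteq H$ and $P(B_n) \downarrow P^*(H)$, and set $H^* := \bigcap_n B_n$. Then $H^*$ is measurable, contains $H$, and by continuity from above satisfies $P(H^*) = P^*(H)$, which gives the extremal property at once. For uniqueness up to null sets, given two candidates $H^{(1)}$ and $H^{(2)}$ the intersection $H^{(1)} \cap H^{(2)}$ is itself a measurable cover of $H$, so the extremal property forces $P(H^{(1)}) = P(H^{(1)} \cap H^{(2)}) = P(H^{(2)})$, whence $H^{(1)} \Delta H^{(2)}$ is null. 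The kernel $H_*$ is obtained symmetrically, or by taking $H_* := \Omega \setminus (\Omega \setminus H)^*$.

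For the function statement, for each $r \in \Q$ let $E_r := \{f > r\}$ and invoke the set construction to produce a measurable outer cover $E_r^*$. Define
\[
f^*(\omega) := \sup\{r \in \Q : \omega \in E_r^*\}.
\]
Measurability of $f^*$ follows from $\{f^* > t\} = \bigcup_{r \in \Q,\ r > t} E_r^*$, and the pointwise inequality $f^*(\omega) \geq f(\omega)$ holds because $r < f(\omega)$ implies $\omega \in E_r \subseteq E_r^*$. For extremality, given a measurable $h$ with $h \geq f$ everywhere, each $\{h > r\}$ is a measurable superset of $E_r$, so by the minimality property of $E_r^*$ the difference $E_r^* \setminus \{h > r\}$ is null; off the countable union of these nulls, $\omega \in E_r^*$ forces $h(\omega) > r$, and taking the supremum over admissible rationals yields $h \geq f^*$. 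The function $f_*$ is handled symmetrically, for instance by applying the $f^*$ construction to $-f$ and negating, and uniqueness of $f^*$ and $f_*$ up to almost sure equality follows by mutual application of their extremal properties.

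The main delicacy is that the proposition insists on the pointwise inequalities $f_* \leq f \leq f^*$ \emph{everywhere}, whereas a naive essential-sup/inf construction (e.g.\ minimising $\int h\,dP$ over measurable $h \geq f$ after truncating) would yield only almost sure domination. The rational-level-set construction sidesteps this cleanly: the pointwise inequality between $f^*$ and $f$ is inherited directly from the automatic containments $E_r \subseteq E_r^*$. Beyond that, the only input needed is countability of $\Q$, ensuring that the union of the exceptional null sets in the extremality verification remains null.
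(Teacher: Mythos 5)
Your proof is correct, but note that the paper itself gives no proof of this proposition: it is stated as a collection of known facts with a citation to van der Vaart and Wellner (Lemmas 1.2.2 and 1.2.3), where the minimal measurable majorant is constructed directly at the level of functions, via a minimizing sequence of measurable majorants $h_m \ge f$ (composed with a bounded increasing transformation such as $\arctan$ to make the minimization well posed) and setting $f^* = \inf_m h_m$. Your reduction of the function case to the set case through the rational level sets $E_r = \{f > r\}$ is a genuinely different and self-contained route, and all the essential steps check out: the minimizing-sequence construction of $H^*$, the intersection trick for uniqueness, the identity $\{f^* > t\} = \bigcup_{r \in \Q,\, r>t} E_r^*$ for measurability, and the countable union of null sets for extremality --- for that last step you should make explicit that $E_r^* \cap \{h > r\}$ is itself a measurable cover of $E_r$, so minimality forces $P(E_r^* \cap \{h>r\}) = P(E_r^*)$, which is the same trick you spelled out in the uniqueness argument. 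Two small caveats. First, your $f^*$ may take the value $+\infty$ on a set of positive (even full) measure: if every $\{f > n\}$ has outer measure one, then every measurable majorant is almost surely infinite, so no real-valued $f^*$ exists at all; the proposition's ``measurable functions'' must therefore be read as extended-real-valued, which is precisely the convention of the cited source, and your construction conforms to it (your supremum is never $-\infty$, since $f(\omega) > r$ for some rational $r$ gives $\omega \in E_r^*$). Second, your closing remark about the ``main delicacy'' is overstated: the naive minimization does yield domination everywhere, not merely almost surely, because the pointwise infimum of a countable minimizing sequence of everywhere-majorants is still an everywhere-majorant; so the pointwise inequality is cheap on both routes, and the real merit of your level-set approach is its elementary, purely set-theoretic character.
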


The functions $f_*$ and $f^*$ are the {\em maximal measurable minorant} and {\em minimal measurable majorant} of $f$, respectively.

We then have $P_*(H)=P(H_*)$ and $P^*(H)=P(H^*)$, where $P_*$ and $P^*$ are the inner and outer measures generated by $P$.  Note that $H$ is measurable with respect to the completion of $P$ if and only if $P_*(H)=P^*(H)$, in which case it has that value as its measure with respect to the completion of $P$.

We say that a set is {\em maximally nonmeasurable}\/ provided that $P_*(H)=0$ and $P^*(H)=1$.  Such a set is one all of whose measurable subsets have null measure and all of whose measurable supersets have full measure.

As a replacement for the independence assumption in the case of ordinary random variables, take our probability space $(\Omega,\scr F,P)$ to be a product of the probability spaces $(\Omega_n,\scr F_n,P_n)$, and let our sequence of possibly nonmeasurable random variables be a sequence of functions $X_1,X_2,...$ on $\Omega$ such that $X_n(\omega_1,\omega_2,...)$
depends only on the value of $\omega_n$, so that there is a function $\Psi_n$ such that $X_n(\omega_1,\omega_2,...) = \Psi_n(\omega_n)$.  We will say that $X_1,X_2,...$ is then a sequence of independent identically-distributed possibly-nonmeasurable random variables (iidpnmrvs) providing that all the
probability spaces $(\Omega_n,\scr F_n,P_n)$ are the same space $(\Omega_1,\scr F_1,P_1)$ and that $\Psi_n$ is the same function $\Psi$ for all $n$.

The following fact about product measures and the $(\cdot)_*$ and $(\cdot)^*$ operators follows from \cite[Lemma~1.2.5]{VW}.

\begin{prop}\label{prop:product}
Suppose $(\Omega,\scr F,P)$ is a product of the probability spaces $(\Omega_n,\scr F_n,P_n)$ for $n=1,2,...$.  Let
$\Psi_n$ be a function on $\Omega_n$.  Let $X_n(\omega_1,\omega_2,...) = \Psi_n(\omega_n)$.
Let $Y_n(\omega_1,\omega_2,...) = (\Psi_n)_*(\omega_n)$ and $Z_n(\omega_1,\omega_2,...) = \Psi_n^*(\omega_n)$.
Then $P$-almost surely we have $(X_n)_* = Y_n$ and $X_n^* = Z_n$.
\end{prop}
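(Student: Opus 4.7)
The plan is a two-sided inequality argument; I will describe it for $(X_n)_* = Y_n$, since $X_n^* = Z_n$ is entirely symmetric. The easy inequality $Y_n \le (X_n)_*$ almost surely follows immediately from Proposition~\ref{prop:L-U}: $Y_n$ is measurable on $\Omega$ as the composition of the measurable function $(\Psi_n)_*$ with the $n$-th coordinate projection, and $Y_n \le X_n$ pointwise because $(\Psi_n)_* \le \Psi_n$ pointwise on $\Omega_n$. So $Y_n$ is a measurable minorant of $X_n$ and the defining property of $(X_n)_*$ immediately gives $Y_n \le (X_n)_*$ almost surely.

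For the reverse inequality $(X_n)_* \le Y_n$ almost surely, the idea is to apply Fubini slicewise. Write $\Omega$ as a product $\Omega_n \times \tilde\Omega$, where $\tilde\Omega = \prod_{k \ne n}\Omega_k$ carries the corresponding product probability $\tilde P$, so that $P = P_n \times \tilde P$. Let $f$ be a representative of $(X_n)_*$ that is measurable with respect to the product $\sigma$-field and satisfies $f \le X_n$ everywhere. For each fixed $\tilde\omega \in \tilde\Omega$ the slice $f(\cdot,\tilde\omega)$ is then $\scr F_n$-measurable, and the everywhere bound $f \le X_n$ gives $f(\omega_n,\tilde\omega) \le \Psi_n(\omega_n)$ for every $\omega_n$. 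The defining property of $(\Psi_n)_*$ applied slice-by-slice yields $f(\omega_n,\tilde\omega) \le (\Psi_n)_*(\omega_n) = Y_n(\omega_n,\tilde\omega)$ for $P_n$-almost every $\omega_n$. Integrating this slicewise inequality against $\tilde P$ via Fubini--Tonelli gives $f \le Y_n$ $P$-almost surely, completing the argument.

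The main obstacle is securing both the product-measurability and the everywhere pointwise bound on $f$ simultaneously. If $\scr F$ is taken to be the product $\sigma$-field itself, slices of any $\scr F$-measurable function are automatically $\scr F_n$-measurable and no preparation is needed. If instead $\scr F$ is its completion (the more common convention for product measures), one first picks a product-measurable function $g$ that equals $(X_n)_*$ off a $P$-null set $N$, and then sets $f := g$ on $N^c$ and $f := -\infty$ on $N$; this $f$ is product-measurable, everywhere dominated by $X_n$, and agrees with $(X_n)_*$ almost surely, so it is a legitimate representative for the slicing argument. The treatment of $X_n^* = Z_n$ is the mirror image, with majorants and $+\infty$ playing the roles of minorants and $-\infty$.
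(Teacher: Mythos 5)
Your proposal is correct, but note that the paper does not actually contain a proof of Proposition~\ref{prop:product}: it obtains the statement by citing Lemma~1.2.5 of \cite{VW}, which rests on the perfectness of coordinate projections on product probability spaces. What you have written is therefore a self-contained replacement for that citation, and it is essentially the standard argument underlying the cited lemma: the easy inequality $Y_n \le (X_n)_*$ from the universal property of the maximal measurable minorant, and the reverse inequality by factoring $P = P_n \times \tilde P$, slicing a product-measurable representative $f$ of $(X_n)_*$, applying the defining property of $(\Psi_n)_*$ to each slice $f(\cdot,\tilde\omega) \le \Psi_n$, and integrating the slicewise null sets away with Tonelli (the exceptional set $\{f > Y_n\}$ is product-measurable because $f$ and $Y_n$ are, so Tonelli applies and its $\tilde\omega$-sections are $P_n$-null). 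Your handling of the ambiguity in ``product'' --- plain product $\sigma$-field versus its completion --- is also exactly the care this statement needs, since for the completed field $(X_n)_*$ need not itself be product-measurable. What your route buys is transparency and independence from \cite{VW}; what the citation buys the paper is brevity and a more general statement (the lemma in \cite{VW} covers arbitrary perfect maps, not just coordinate projections).

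One small repair: Proposition~\ref{prop:L-U} is stated for real-valued functions, so after you set $f := -\infty$ on the null set $N$, the slicewise application of the minorant property is not literally licensed for the extended-real-valued slices. Two painless fixes: either set $f := Y_n$ on $N$ instead, which keeps $f$ real-valued, product-measurable, everywhere $\le X_n$ (since $Y_n \le X_n$ pointwise), and still almost surely equal to $(X_n)_*$; or observe that for any extended-real measurable $g \le \Psi_n$ the function $\max(g,(\Psi_n)_*)$ is real-valued, measurable, and still $\le \Psi_n$ everywhere, so the real-valued case applies to it and yields $g \le (\Psi_n)_*$ almost surely anyway. With either adjustment the argument is complete, and the mirror-image case $X_n^* = Z_n$ goes through as you say.
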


In particular, if $X_1,...,X_n$ are iidpnmrvs, then $(X_1)_*,...,(X_n)_*$ are identically distributed independent random variables,
and so are $X_1^*,...,X_n^*$.  Let $S_n=X_1+\cdots+X_n$.  From the Strong Law of Large Numbers as applied to $\{(X_n)_*\}$ and $\{X_n^*\}$ (and using the fact that if $|X_1|^*$ is integrable, then so are $(X_1)_*$ and $X_1^*$) it then follows that almost surely:
\begin{equation}\label{eq:easy}
E[(X_1)_*] \le \liminf_{n\to\infty} \frac{S_n}{n} \le \limsup_{n\to\infty} \frac{S_n}{n} \le E[X_1^*].
\end{equation}
Here and elsewhere ``almost surely'' will mean {\em except perhaps on a set of probability zero}.  Thus an event holds almost surely provided its lower probability
is $1$.  (In the case of complete measures, this is equivalent to the usual notion of holding almost surely as holding on a set of full measure.)

We can define the {\em lower} and {\em upper expectations} of $X_1$ as $E_*[X_1]=E[(X_1)_*]$ and $E^*[X_1] = E[X_1^*]$, respectively (for more on lower
and upper expectations, see~\cite{HJ2002}).
Again, we have a trivial case when $E[(X_1)_*] = E[X_1^*]$ and then the Strong Law holds.  In that case, $(X_n)_* = X_n^*$ almost surely (since $(X_n)_* \le X_n \le X_n^*$), and $X_n$ will be measurable with respect to the completion of $P$.

The converse is also known~\cite{HJ1986}: if the Strong Law holds, then $X_1$ almost surely equals an $L^1$-function, and is measurable with respect to the completion of $P$.

Can \eqref{eq:easy} be improved on?  For instance, can the first or last almost sure inequality sometimes be
replaced by an equality?  Or can we say that in the non-trivial case it is almost surely true that $S_n/n$ diverges?  Our main result shows that the answers to these questions are negative, and that the failure of the Strong Law for non-measurable $X_1$ is radical.

\begin{thm}\label{th:main} Let $X_1,X_2,...$ be iidpnmrvs with $E^*[|X_1|] < \infty$.  Suppose $A$ is a non-empty proper subset of $[E_*[X_1],E^*[X_1]]$.  Then each of the following is maximally nonmeasurable:
\begin{enumerate}
\item[\textup{(i)}] the subset of $\Omega$ where $\liminf_{n\to\infty} S_n(n)/n$ is in $A$
\item[\textup{(ii)}] the subset of $\Omega$ where $\limsup_{n\to\infty} S_n(n)/n$ is in $A$
\item[\textup{(iii)}] the subset of $\Omega$ where $\lim_{n\to\infty} S_n(n)/n$ exists and is in $A$
\item[\textup{(iv)}] the subset of $\Omega$ where $\lim_{n\to\infty} S_n(n)/n$ exists
\item[\textup{(v)}] the subset of $\Omega$ where all the limit points of $S_n(n)/n$ are in $A$.
\end{enumerate}
\end{thm}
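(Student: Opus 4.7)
My plan divides into a dichotomy step and a construction step.

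\textbf{Dichotomy.} Each event $H$ in (i)-(v) is, modulo a null set, invariant under every finite permutation of the coordinates $\omega_i$: such a permutation alters each $S_n$ by an $O(1)$ amount, leaving $\liminf S_n/n$, $\limsup S_n/n$, and both the existence and value of $\lim S_n/n$ unchanged. Permutations preserve $P$, so both the measurable envelope $H^*$ and the measurable kernel $H_*$ are permutation-invariant modulo null sets. The Hewitt--Savage zero-one law then forces $P(H^*), P(H_*) \in \{0,1\}$, that is, $P^*(H), P_*(H) \in \{0,1\}$. So it suffices to show $P^*(H) > 0$ and $P^*(H^c) > 0$ for each listed event.

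\textbf{Reduction.} Write $H_a := \{\omega : \lim_n S_n(\omega)/n = a\}$ and $I := [E_*[X_1], E^*[X_1]]$. I claim it suffices to establish (K) $P^*(H_a) > 0$ for every $a \in I$, and (O) $P^*(\{\omega : \lim_n S_n(\omega)/n \text{ does not exist}\}) > 0$. Granting these: for each event in (i)-(iii),(v), picking $a \in A$ puts $H_a$ inside the event, so the event has positive outer measure, hence outer measure $1$ by the dichotomy; picking $a' \in I \setminus A$ (nonempty since $A$ is a proper subset) puts $H_{a'}$ inside the complement, so the event has inner measure less than $1$, hence $0$. For (iv), (K) gives the outer side and (O) the inner side.

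\textbf{Key construction.} To prove (K), I show that for every measurable $D \subseteq \Omega$ with $P(D) > 0$ there exists $\omega \in D \cap H_a$, so that $H_a$ is not contained in any measurable null set. The approach is coordinatewise: by Fubini, at each step the set of next coordinates $\omega_n$ that keep a positive conditional $D$-measure alive has positive $P_1$-measure. The crucial analytic input is that, because $\Psi_*$ is the maximal measurable minorant of $\Psi$, the set $\{\omega_1 \in \Omega_1 : \Psi(\omega_1) \le \Psi_*(\omega_1) + \e\}$ has outer $P_1$-measure $1$ for every $\e > 0$: any measurable $A$ contained in $\{\Psi > \Psi_* + \e\}$ would make $\Psi_* + \e \cdot 1_A$ a strictly larger measurable minorant, contradicting the maximality of $\Psi_*$. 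Symmetrically $P_1^*(\{\Psi \ge \Psi^* - \e\}) = 1$. Hence at each step we may pick $\omega_n$ in the intersection of a positive-$P_1$-measure admissible set with either an $\e_n$-neighborhood of $\Psi_*$ (when $S_{n-1}/(n-1) > a$) or of $\Psi^*$ (when $S_{n-1}/(n-1) < a$). After first replacing $D$ by a positive-measure compact subset via inner regularity, a nested-compact argument produces $\omega \in D$ with the chosen coordinates, and the feedback rule together with the envelope proximity forces $\lim_n S_n/n = a$. An analogous construction with two alternating targets yields (O).

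\textbf{Main obstacle.} The subtlest point is that the sequence $(\omega_n)$ is built by deterministic conditional choices, not drawn iid from $P_1$, so the SLLN does not automatically govern the Cesàro averages of $\Psi_*(\omega_n)$ and $\Psi^*(\omega_n)$ along this sequence --- and without their convergence to $E_*[X_1]$ and $E^*[X_1]$, the feedback rule will not actually steer $S_n/n$ to $a$. The plan is to refine each step's selection by further requiring $\omega_n$ to lie in a measurable set of positive $P_1$-measure on which the conditional averages of $\Psi_*$ and $\Psi^*$ given the past are within $1/n$ of their unconditional values; such sets exist by the SLLN applied to the ambient iid process, and their intersection with the admissible and envelope-proximity sets remains nonempty by the outer-measure-$1$ calculation above. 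The resulting Cesàro control on the envelopes, combined with the feedback rule, pins $\lim_n S_n(\omega)/n = a$ and completes the proof.
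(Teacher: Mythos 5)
There is a genuine gap, and it sits exactly where you lean on ``inner regularity'' and a ``nested-compact argument.'' The theorem is stated for an arbitrary probability space $(\Omega_1,\scr F_1,P_1)$: there is no topology on $\Omega_1$, hence no compact sets and no inner regularity, so you cannot replace $D$ by a compact subset of positive measure. This is not a removable convenience but the load-bearing step, because without it the coordinatewise scheme fails outright: keeping every finite-dimensional section $D_{\omega_1\cdots\omega_n}$ at positive measure does \emph{not} force the constructed point $(\omega_1,\omega_2,\dots)$ into $D$. For instance, take $D=\{\omega:\omega_k\in B\ \text{for some}\ k\}$ with $0<P_1(B)<1$; choosing every $\omega_k\notin B$ keeps every section at measure one, yet the resulting point lies outside $D$. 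So in the stated generality your argument never actually produces a point of $D\cap H_a$, and claims (K) and (O) are unproven. Moreover, the obstacle you flag as ``main'' is both misdiagnosed and mispatched: the per-step requirement that the ``conditional averages of $\Psi_*$ and $\Psi^*$ given the past are within $1/n$ of their unconditional values'' does not describe a subset of $\Omega_1$ (those Ces\`aro averages are functionals of the entire chosen sequence, not of the single coordinate $\omega_n$), and nothing guarantees such a set would meet the admissible set in positive measure. The clean patch --- intersect $D$ with the $P$-full event $G$ on which $\frac1n\sum_{k\le n}\Psi_*(\omega_k)\to E_*[X_1]$ and $\frac1n\sum_{k\le n}\Psi^*(\omega_k)\to E^*[X_1]$, so that any point landed in $D\cap G$ automatically has the control your feedback rule needs --- again requires landing the final point in $D\cap G$, i.e., exactly the compactness you do not have.

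For comparison, the paper avoids pointwise steering altogether. Its Lemma~\ref{lem:conv} produces, for each $\alpha\in[E_*[X_1],E^*[X_1]]$, an \emph{extension} $P'$ of $P$ with $S_n/n\to\alpha$ $P'$-almost surely (and, when $E_*[X_1]<E^*[X_1]$, an extension under which $S_n/n$ almost surely diverges), built by truncating, approximating $\Psi$ by simple functions, using Lemma~\ref{lem:ext-f} to extend each coordinate measure so that the simple approximation almost surely equals its measurable minorant or majorant, taking the product of the extended coordinate measures, and invoking Kolmogorov's strong law; Lemma~\ref{lem:strategy} then converts ``measure $1$ under one extension, measure $0$ under another'' into maximal nonmeasurability, on completely arbitrary spaces. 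Note that this route also makes your dichotomy step unnecessary: once some extension assigns $\{S_n/n\to a\}$ probability one, $P^*(H_a)=1$ follows directly from Lemma~\ref{lem:strategy}, with no appeal to Hewitt--Savage (your Hewitt--Savage reduction is correct as far as it goes, but it is redundant, since your construction, had it worked, would already have given outer measure one). Your one genuinely sound analytic ingredient --- that $P_1^*(\{\Psi\le\Psi_*+\e\})=1$ for every $\e>0$, by maximality of $\Psi_*$ --- is correct and is morally the germ of the paper's Lemma~\ref{lem:ext-f}; but to prove the theorem in the stated generality you must route that ingredient through extensions of the measure rather than through a point-by-point selection inside a fixed positive-measure set.
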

Thus in the non-trivial case ($[E_*[X_1],E^*[X_1]]$ has a non-empty proper subset if and only if $E_*[X_1]<E^*[X_1]$) nothing can be probabilistically said, with respect to $P$, about the limit points of $S_n(n)/n$ except that all the limit points lie within $[E_*[X_1],E^*[X_1]]$.

For completeness, here is a somewhat analogous result about the Weak Law:
\begin{thm}\label{th:weak} Let $X_1,X_2,...$ be iidpnmrvs  with $E^*[|X_1|] < \infty$.  Suppose $a \in [E[(X_1)_*],E[X_1^*]]$ and that $\e>0$ is sufficiently small that
$[E_*[X_1], E^*[X_1]]$ is not a subset of $[a-\e, a+\e]$.  Then $P_*(|S_n/n-a|>\e) \to 0$ and $P^*(|S_n/n-a|>\e) \to 1$ as $n\to\infty$.
\end{thm}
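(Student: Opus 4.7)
The plan is to deduce Theorem~\ref{th:weak} directly from Theorem~\ref{th:main}(iii). The assumption $[E_*[X_1],E^*[X_1]] \not\subseteq [a-\e,a+\e]$ together with $a \in [E_*[X_1],E^*[X_1]]$ forces $E_*[X_1] < E^*[X_1]$, so every singleton $\{c\} \subset [E_*[X_1], E^*[X_1]]$ is a nonempty proper subset. Theorem~\ref{th:main}(iii) then applies: for any such $c$, the event $G_c = \{\omega : \lim_n S_n(\omega)/n = c\}$ has $P^*(G_c) = 1$.

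The only auxiliary ingredient I need is the standard continuity of outer measure under increasing unions: if $C_1 \subseteq C_2 \subseteq \cdots$ with $C = \bigcup_N C_N$, then $P^*(C_N) \uparrow P^*(C)$. This is a routine measurable-hull argument (take minimal measurable majorants of the $C_N$, intersect over tails to enforce monotonicity, and take the union as a majorant of $C$) and does not interact with the nonmeasurability in the problem.

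For the first conclusion, take $c = a$: if $\lim_n S_n/n = a$, then $|S_n/n - a| \le \e$ for all sufficiently large $n$, so $G_a \subseteq \bigcup_N C_N$ with $C_N = \bigcap_{n \ge N}\{|S_n/n - a| \le \e\}$. Since $P^*(G_a) = 1$, continuity gives $P^*(C_N) \to 1$, and because $C_N \subseteq \{|S_n/n-a|\le\e\}$ for every $n \ge N$, this forces $P^*(|S_n/n - a|\le\e) \to 1$, equivalently $P_*(|S_n/n - a|>\e)\to 0$. For the second conclusion, pick $b \in [E_*[X_1], E^*[X_1]]$ with $|b - a| > \e$ (which exists by hypothesis), apply Theorem~\ref{th:main}(iii) with $c = b$, and observe that $\lim_n S_n/n = b$ together with $|b-a|>\e$ forces $|S_n/n - a| > \e$ eventually; the identical continuity argument, applied to $D_N = \bigcap_{n \ge N}\{|S_n/n - a| > \e\}$, gives $P^*(|S_n/n - a|>\e)\to 1$.

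There is no serious obstacle beyond Theorem~\ref{th:main}(iii) itself, which is where all the real work sits; the present statement is a formal corollary, obtained by marrying the maximal nonmeasurability of the limit-value sets to the continuity of $P^*$ along increasing sequences.
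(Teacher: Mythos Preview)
Your argument is correct. The continuity of outer measure along increasing sequences is valid (your sketch is right: intersecting measurable hulls over tails produces an increasing sequence of measurable covers with the correct measures), and combined with $P_*(A)=1-P^*(A^c)$ it delivers both conclusions.

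The paper does not route through Theorem~\ref{th:main}. It goes back to the underlying Lemma~\ref{lem:conv}, which produces extensions $P'$ and $P''$ of $P$ under which $S_n/n\to a$ and $S_n/n\to b$ almost surely; since almost sure convergence implies convergence in probability \emph{under those extensions}, one has $P'(|S_n/n-a|>\e)\to 0$ and $P''(|S_n/n-a|>\e)\to 1$ directly, and Lemma~\ref{lem:strategy} then transfers these to $P_*$ and $P^*$. Your approach instead uses only the outer-measure-one conclusion of Theorem~\ref{th:main}(iii) for the singletons $\{a\}$ and $\{b\}$, and then needs the extra step of passing from $P^*(G_c)=1$ to a statement about $P^*(\{|S_n/n-a|\le\e\})$ at individual $n$, which is where the continuity argument enters. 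The trade-off: the paper's path is shorter and avoids the continuity lemma, while yours has the organizational virtue of showing that Theorem~\ref{th:weak} is a formal consequence of Theorem~\ref{th:main} alone, requiring nothing further from the nonmeasurable machinery.
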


The proof of both theorems will be based on the following easy fact about the existence of extensions of measures.

\begin{lem}\label{lem:ext-f} Suppose $f$ is a function on a probability space $(\Omega, \scr F, P)$ and $f$ is simple, i.e., takes on only finitely many values.  Then there are extensions $P_*$ and $P^*$ of $P$ defined on the $\sigma$-field generated by $\scr F$ and $f$ such that $f = f_*$ almost surely with respect to $P_*$ and $f = f^*$ almost surely with respect to $P^*$.
\end{lem}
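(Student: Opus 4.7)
The plan is to make the generated $\sigma$-field $\scr G$ completely explicit, identify $f_*$ and $f^*$ with concrete simple functions built from $\scr F$-measurable kernels and hulls of the level sets of $f$, and then build $P_*$ and $P^*$ by an explicit formula that ``collapses'' the mass of each unmeasurable piece $E_j$ onto a suitable measurable subset.

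First I would write $f=\sum_{j=1}^{k}c_j\mathbf{1}_{E_j}$ with $c_1<\cdots<c_k$ and $\{E_j\}$ a (possibly non-measurable) partition, and put $A_j=\{f\ge c_j\}=E_j\cup\cdots\cup E_k$, so that $\Omega=A_1\supseteq A_2\supseteq\cdots\supseteq A_k$. Using Proposition~\ref{prop:L-U}, I would pick $\scr F$-measurable kernels $(A_j)_*$ and hulls $(A_j)^*$, and by replacing $(A_j)_*$ with $\bigcup_{i\ge j}(A_i)_*$ (dually for the hulls) arrange the nestings $(A_j)_*\supseteq(A_{j+1})_*$ and $(A_j)^*\supseteq(A_{j+1})^*$. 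Setting $g=\sum_j c_j\mathbf{1}_{(A_j)_*\setminus(A_{j+1})_*}$ with the convention $(A_{k+1})_*=\varnothing$, a short verification shows $g=f_*$ almost surely: $g$ is $\scr F$-measurable with $g\le f$ pointwise, while any measurable $h\le f$ satisfies $\{h>c_j\}\subseteq\{f>c_j\}=A_{j+1}$, hence $\{h>c_j\}\subseteq(A_{j+1})_*$ up to a $P$-null set, giving $h\le g$ a.s. The dual identity $f^*=\sum_j c_j\mathbf{1}_{(A_j)^*\setminus(A_{j+1})^*}$ a.s.\ is obtained the same way.

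Next I would observe that every $B\in\scr G$ admits a representation $B=\bigsqcup_{i=1}^{k}(F_i\cap E_i)$ with $F_i\in\scr F$, since such sets already form a $\sigma$-field containing $\scr F$ and each $E_j$. I would then \emph{define}
\[
P_*\Bigl(\bigsqcup_{i=1}^{k}F_i\cap E_i\Bigr)=\sum_{i=1}^{k}P\bigl(F_i\cap[(A_i)_*\setminus(A_{i+1})_*]\bigr),
\]
and dually $P^*$ with $(A_i)^*$ in place of $(A_i)_*$.

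Finally I would verify four things. Countable additivity follows after disjointifying the $F_{n,i}$'s inside $\scr F$ (which leaves each intersection $F_{n,i}\cap E_i$ unchanged, because the $F_{n,i}\cap E_i$ were already disjoint) and swapping the double sum. The identity $P_*|_{\scr F}=P$ is immediate because $(A_1)_*=\Omega$, so the sum telescopes to $P(F)$. The a.s.\ identity $f=f_*$ with respect to $P_*$ follows from the set-theoretic equality $\{f=f_*\}=\bigsqcup_i\bigl((A_i)_*\setminus A_{i+1}\bigr)$, whose $\scr G$-complement $\bigsqcup_i\bigl((A_i)_*^c\cap E_i\bigr)$ is assigned the value $\sum_i P\bigl((A_i)_*^c\cap[(A_i)_*\setminus(A_{i+1})_*]\bigr)=0$. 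The main obstacle, on which I would spend most of the care, is well-definedness: if $\bigsqcup_i F_i\cap E_i=\bigsqcup_i G_i\cap E_i$, then $(F_i\diff G_i)\cap E_i=\varnothing$, forcing $(F_i\diff G_i)\cap[(A_i)_*\setminus(A_{i+1})_*]\subseteq A_{i+1}\setminus(A_{i+1})_*$; here the defining property of the kernel---every $\scr F$-measurable subset of $A_{i+1}\setminus(A_{i+1})_*$ is $P$-null---is precisely what makes the contributions of the two representations agree. The construction of $P^*$ is handled by the same pattern using hulls instead of kernels.
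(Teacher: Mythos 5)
Your proof is correct, but it proceeds by a genuinely different route than the paper's. The paper's entire argument is two lines: reduce to the case of an indicator $f = 1_A$ and invoke the classical extension theorem (cited to Halmos, p.~71) that for any $\alpha\in[P_*(A),P^*(A)]$ there is an extension of $P$ to $\sigma(\scr F\cup\{A\})$ giving $A$ measure $\alpha$; the choice $\alpha=P_*(A)$ makes $A\setminus A_*$ null, whence $1_A=(1_A)_*$ almost surely, and dually for $\alpha=P^*(A)$. The general simple case is then left implicit: one must adjoin the $k$ level sets one at a time and track how the kernels and hulls of the remaining sets behave relative to the growing $\sigma$-field before reassembling the almost-sure identity $f=f_*$. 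You avoid both the citation and this induction by doing everything in one stroke: the explicit description of $\sigma(\scr F\cup\{f\})$ as the sets $\bigsqcup_i(F_i\cap E_i)$ with $F_i\in\scr F$, the concrete versions $f_*=\sum_j c_j 1_{(A_j)_*\setminus(A_{j+1})_*}$ and dually $f^*$ built from nested kernels and hulls, and a one-line formula for the extension. Your steps all check out: the crux, well-definedness, does reduce to the containment $(F_i\Diff G_i)\cap\bigl[(A_i)_*\setminus(A_{i+1})_*\bigr]\subseteq A_{i+1}\setminus(A_{i+1})_*$ together with the fact that $\scr F$-measurable subsets of $A_{i+1}\setminus(A_{i+1})_*$ are $P$-null; and in the dual construction the analogous containment lands in $(A_i)^*\setminus A_i$, whose measurable subsets are null by the hull property, so the symmetry you assert at the end is real (likewise, the disjointification for countable additivity, the telescoping giving $P|_{\scr F}$, and the computation that the complement $\bigsqcup_i\bigl((A_i)_*^c\cap E_i\bigr)$ receives measure zero are all sound). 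What your version buys is self-containedness: it amounts to reproving the Halmos extension theorem in exactly the generality needed, handling all $k$ level sets simultaneously. What the paper's version buys is brevity, plus the extra flexibility of the cited result, which realizes any value $\alpha$ between inner and outer measure rather than only the two extremes.
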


It suffices to see this where $f$ is an indicator function, and in that case the result follows from the observation that if $A \subseteq \Omega$ and $\alpha \in [P_*(A), P^*(A)]$, then there is an extension $P_\alpha$ of $P$ to $\sigma(\scr F\cup \{ A \})$ such that $P_\alpha(A)=\alpha$ (cf.\ \cite[p.~71]{Halmos}).

\begin{xrem} Our proofs of the theorems would be much simpler if we could have this for $f$ taking on infinitely many values, but alas the lemma is false in that case.  To see the falsity, let $\Omega$ be the open square $(0,1)^2$, make $\scr F$ be the
$\sigma$-field of subsets of the form $A \times (0,1)$ for $A\subseteq (0,1)$ Lebesgue-measurable, take $P$ to be the restriction of Lebesgue measure to $\scr F$, and set $f(x,y) = y$.  Then $f_* = 0$ almost surely with respect to $P$ but there is no extension of $P$ with respect to which $f=0$ almost surely, since
$f$ is nowhere equal to zero.
\end{xrem}

\section{Proofs}

We need some very easy preliminaries.

\begin{lem}\label{lem:equals}
Suppose that $f=g$ on a measurable set $B$.  Then $f_* = g_*$ and $f^* = g^*$ almost surely on $B$.
\end{lem}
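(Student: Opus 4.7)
The plan is to prove the two equalities by a simple stitching argument: splice the candidate extremal measurable function on $B$ with the known extremal function on $B^c$, and then use the defining property (maximality/minimality) of $g_*$ and $g^*$.

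First I would handle the minorant statement, showing $f_* = g_*$ almost surely on $B$. Since $f_* \le f$ everywhere and $f = g$ on the measurable set $B$, the function
\[
h := f_* \cdot 1_B + g_* \cdot 1_{B^c}
\]
is measurable and satisfies $h \le g$ everywhere (on $B$ by the equality $f=g$, on $B^c$ by the defining inequality $g_* \le g$). By the maximality clause in Proposition~\ref{prop:L-U} applied to $g$, this forces $h \le g_*$ almost surely, and restricting to $B$ gives $f_* \le g_*$ almost surely on $B$. Swapping the roles of $f$ and $g$ yields the reverse inequality, hence equality almost surely on $B$.

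The argument for the majorants is entirely symmetric: I would replace $(\cdot)_*$ by $(\cdot)^*$, form $h := f^* \cdot 1_B + g^* \cdot 1_{B^c}$, check $h \ge g$ everywhere, and invoke the minimality clause for $g^*$ to get $f^* \ge g^*$ a.s.\ on $B$, and then swap.

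There is really no main obstacle here — the only thing to be careful about is that the spliced function $h$ is genuinely measurable, which is immediate because $B$ is assumed measurable and both $f_*$, $g_*$ (respectively $f^*$, $g^*$) are measurable by Proposition~\ref{prop:L-U}. Everything else is a one-line appeal to the extremal characterization of the minorant/majorant.
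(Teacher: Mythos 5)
Your proof is correct and follows essentially the same route as the paper: both splice a measurable function across $B$ and $B^c$ and invoke the extremal characterization from Proposition~\ref{prop:L-U}, then swap $f$ and $g$. The only cosmetic difference is that the paper takes $h = \min(f^*,g^*)$ on $B$ and $f^*$ on $B^c$ and appeals to the minimality of $f^*$, whereas you splice $f^*$ on $B$ with $g^*$ on $B^c$ and appeal to the minimality of $g^*$ --- an equivalent one-line variant.
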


\begin{proof}
Let $h(\omega) = f^*(\omega)$ for $\omega \notin B$ and let $h(\omega) = \min(f^*(\omega), g^*(\omega))$ otherwise.
Then $h$ is a measurable function such that $h\ge f$.  Hence $h \ge f^*$ almost surely.  Then
$g^* \ge h \ge f^*$ almost surely on $B$.  In the same way, we see that $f^* \ge g^*$ almost surely on $B$.
That $f_*=g_*$ almost surely on $B$ is proved the same way.
\end{proof}

The following is a simple consequence of~\cite[Lemma 1.2.2]{VW}.

\begin{lem}\label{lem:approx}
If $|f-g| \le \e$ everywhere, then $|f_*-g_*| \le \e$ and $|f^*-g^*| \le \e$ almost surely.
\end{lem}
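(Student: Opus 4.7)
The plan is to apply the defining extremal property of $f^*$ and $f_*$ from Proposition \ref{prop:L-U} twice, once in each direction. First I would unpack the hypothesis $|f-g|\le \e$ as the pair of pointwise inequalities $f \le g + \e$ and $g \le f + \e$, and record the trivial observation that for any function $\varphi$ with measurable minimal majorant $\varphi^*$, the translate $\varphi^*+\e$ is itself a measurable function.

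For the upper bound $|f^*-g^*|\le \e$, I would argue as follows. Since $g\le g^*$ everywhere, the measurable function $g^*+\e$ satisfies $f \le g + \e \le g^* + \e$ everywhere, so it is a measurable majorant of $f$. By the minimality property of $f^*$ in Proposition \ref{prop:L-U}, this forces $f^* \le g^* + \e$ almost surely. Exchanging the roles of $f$ and $g$ gives $g^* \le f^* + \e$ almost surely, and combining the two inequalities yields $|f^*-g^*|\le \e$ a.s.

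The bound $|f_*-g_*|\le \e$ is dual: the measurable function $g_*-\e$ satisfies $g_*-\e \le g -\e \le f$ everywhere, so by the maximality property of $f_*$ one has $f_* \ge g_*-\e$ a.s., and symmetry gives $g_* \ge f_*-\e$ a.s.

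There is no real obstacle here; the whole argument is simply noting that a constant shift preserves measurability, and then invoking the characterizing property of $f_*$ and $f^*$. The only mildly subtle point is that the resulting inequalities hold only almost surely rather than everywhere, but this is intrinsic to the conclusions in Proposition \ref{prop:L-U} and so is not something that can or should be strengthened.
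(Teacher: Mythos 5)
Your proof is correct. The paper itself gives no argument for this lemma, simply citing \cite[Lemma 1.2.2]{VW} (where the standard route is via subadditivity, $f^* = (g+(f-g))^* \le g^* + (f-g)^* \le g^* + \e$); your self-contained derivation from the extremal characterization in Proposition~\ref{prop:L-U} --- noting that $g^*+\e$ is a measurable majorant of $f$ and $g_*-\e$ a measurable minorant, then symmetrizing --- is exactly the mechanism underlying that cited lemma, so the two approaches coincide in substance.
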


The following trivial lemma encapsulates the strategy for the proof of our theorems.

\begin{lem}\label{lem:strategy}
Suppose that $B$ is subset of a probability space $(\Omega,\scr F,P)$ such that there are extensions
$P_1$ and $P_2$ of $P$ so that $B$ is $P_1$- and $P_2$-measurable with $P_1(B)=x_1$ and $P_2(B)=x_2$.
Then $P_*(B)\le x_1$ and $x_2\le P^*(B)$.  In particular, if $P_1(B)=0$ and $P_2(B)=1$, then $B$ is maximally nonmeasurable.
\end{lem}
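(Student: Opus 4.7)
The plan is to unwind the definitions of inner and outer measure and use the fact that $P_1$ and $P_2$ agree with $P$ on $\scr F$. By Proposition~\ref{prop:L-U}, the inner measure $P_*(B)$ can be computed as the supremum of $P(A)$ over measurable $A \subseteq B$ with $A \in \scr F$, and the outer measure $P^*(B)$ as the infimum of $P(C)$ over measurable $C \supseteq B$ with $C \in \scr F$. So the whole proof amounts to comparing these extremal values against $x_1$ and $x_2$.

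For the first inequality, I would fix any $A \in \scr F$ with $A \subseteq B$. Since $P_1$ extends $P$, $P(A) = P_1(A)$, and since $P_1$ is a measure with $A \subseteq B$ and both $A, B$ measurable under $P_1$, monotonicity gives $P_1(A) \le P_1(B) = x_1$. Taking the supremum over such $A$ yields $P_*(B) \le x_1$. For the second, I would fix any $C \in \scr F$ with $B \subseteq C$, and use that $P(C) = P_2(C) \ge P_2(B) = x_2$ by the same reasoning applied to $P_2$. Taking the infimum gives $P^*(B) \ge x_2$.

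The "in particular" clause is then immediate: if $x_1 = 0$ and $x_2 = 1$, then $0 \le P_*(B) \le 0$ and $1 \le P^*(B) \le 1$, so $P_*(B) = 0$ and $P^*(B) = 1$, which is the definition of $B$ being maximally nonmeasurable. There is no genuine obstacle here; the content is essentially the observation that any extension of $P$ is pinched between the inner and outer measures on every set, so the lemma is really just a convenient restatement of that fact tailored for the strategy used in proving Theorems~\ref{th:main} and~\ref{th:weak}.
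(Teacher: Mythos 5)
Your proof is correct and takes essentially the same approach as the paper: the paper simply instantiates your supremum and infimum at the extremal witnesses from Proposition~\ref{prop:L-U}, writing $P_*(B)=P(B_*)=P_1(B_*)\le P_1(B)=x_1$ and dually $P^*(B)=P(B^*)=P_2(B^*)\ge P_2(B)=x_2$. The difference between quantifying over all measurable $A\subseteq B$ and $C\supseteq B$ versus using the achieved sets $B_*$ and $B^*$ is purely cosmetic; the key fact in both is that an extension agrees with $P$ on $\scr F$ and is monotone.
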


\begin{proof}
We have $P_*(B) = P(B_*) = P_1(B_*) \le P_1(B) = x_1$ and $P^*(B) = P(B^*) = P_2(B^*) \ge P_2(B) = x_2$,
where $B_*$ and $B^*$ are defined with respect to $P$.
\end{proof}

By Lemma~\ref{lem:strategy}, we need to show that for each of the subsets of $\Omega$ mentioned in Theorem~\ref{th:main}, there is an extension of $P$ that assigns measure zero to the subset and another that assigns it measure one.

Finally, as we will soon see, the following lemma will yield all the results of Theorem~\ref{th:main}.

\begin{lem}\label{lem:conv}
Suppose $X_1,X_2,...$ is a sequence of iidpnmrvs such that $E[|X_1|^*]<\infty$.  Then for any $\alpha \in [E_*[X_1],E^*[X_1]]$ there is
an extension $P'$ of $P$ such that $P'$-almost surely $S_n/n$ converges to $\alpha$.
If $E_*[X_1] < E^*[X_1]$ then there is an extension $P''$ of $P$ such that $P''$-almost surely $S_n$ diverges.
\end{lem}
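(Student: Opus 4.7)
The plan is to approximate $\Psi$ by simple functions and apply Lemma \ref{lem:ext-f} to each approximation separately, then knit the resulting extensions together coordinatewise on the product $\Omega=\Omega_1^\N$. The key observation is that although the Remark after Lemma \ref{lem:ext-f} shows we cannot in general make $\Psi$ itself measurable with a prescribed expectation via a single extension of $P_1$ on $\Omega_1$, we may use different extensions on different coordinates of $\Omega$, and the resulting product measure still extends $P$.

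Assume first that $\Psi$ is bounded (the unbounded case follows by truncation, using $E^*[|X_1|]<\infty$ to control the tails). For each $n\ge 1$ pick a simple function $\Psi_n$ on $\Omega_1$ with $|\Psi-\Psi_n|\le 2^{-n}$ everywhere. By Lemma \ref{lem:approx} the intervals $[E_*[\Psi_n],E^*[\Psi_n]]$ converge to $[E_*[\Psi],E^*[\Psi]]$, so there exist $\alpha_n\in[E_*[\Psi_n],E^*[\Psi_n]]$ with $\alpha_n\to\alpha$. Taking a convex combination of the two extensions supplied by Lemma \ref{lem:ext-f} applied to the simple function $\Psi_n$ yields an extension $Q_1^{(n)}$ of $P_1$ onto $\sigma(\scr F_1,\Psi_n)$ under which $\Psi_n$ is measurable with $E_{Q_1^{(n)}}[\Psi_n]=\alpha_n$.

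For the convergent extension, let $P'=\bigotimes_{n=1}^\infty Q_1^{(n)}$ be the product measure on $\Omega$ using $Q_1^{(n)}$ on the $n$th coordinate; each factor agrees with $P_1$ on $\scr F_1$, so $P'$ restricts to $P$ on $\scr F$. Define $\tilde X_n(\omega)=\Psi_n(\omega_n)$ and $\tilde S_n=\sum_{i\le n}\tilde X_i$; under $P'$ the $\tilde X_n$ are independent, uniformly bounded, with means $\alpha_n$. Kolmogorov's SLLN for independent (not necessarily identically distributed) summands with bounded variances yields $\tilde S_n/n-\frac{1}{n}\sum_{i\le n}\alpha_i\to 0$ $P'$-almost surely, while $\frac{1}{n}\sum_{i\le n}\alpha_i\to\alpha$ by Cesaro averaging, so $\tilde S_n/n\to\alpha$ $P'$-a.s. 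The deterministic bound $|S_n-\tilde S_n|\le\sum_{i\le n}2^{-i}\le 1$ then forces $|S_n/n-\tilde S_n/n|\le 1/n$, so $\{S_n/n\to\alpha\}$ contains the $P'$-measurable, $P'$-full-measure set $\{\tilde S_n/n\to\alpha\}$, proving the first assertion in the lower-probability-one sense.

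For the divergent extension $P''$ when $E_*<E^*$, I would use the same coordinatewise idea but partition $\N$ into blocks $B_1,B_2,\ldots$ with $|B_k|$ growing fast enough that late blocks dominate the Cesaro averages, and on coordinates in $B_k$ use $Q_1^{(k)}$ with $\alpha_k$ close to $E_*[\Psi]$ for odd $k$ and close to $E^*[\Psi]$ for even $k$. Then $\frac{1}{n}\sum_{i\le n}\alpha_{k(i)}$ oscillates between values near $E_*[\Psi]$ and near $E^*[\Psi]$, so $\tilde S_n/n$, and hence $S_n/n$ by the same pointwise error bound, diverges $P''$-a.s. The main obstacle is already visible in the first paragraph: one cannot prescribe the expectation of $\Psi$ through a single $\Omega_1$-extension, so the construction must live on $\Omega$, and the simple-function approximation must be coupled with a summable pointwise bound on $|S_n-\tilde S_n|$ to transport the limiting behavior from the measurable proxy $\tilde S_n$ back to $S_n$ itself.
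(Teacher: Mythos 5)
Your proposal is correct, and in outline it is the paper's own proof: approximate the (truncated) $\Psi$ by simple functions, apply Lemma~\ref{lem:ext-f} coordinate by coordinate, pass to a product extension of $P$ on $\Omega$, run Kolmogorov's strong law for independent, non-identically-distributed, bounded summands, and transfer the limit back to $S_n$ through a deterministic error bound; your rapidly growing blocks for the divergence claim are exactly the paper's construction with $a_{n+1}/a_n \to \infty$. You differ at one pivotal step, however, and your version is the sound one. You mix the two extensions from Lemma~\ref{lem:ext-f} \emph{on each coordinate factor} $\Omega_1$, obtaining $Q_1^{(n)}$ with $E_{Q_1^{(n)}}[\Psi_n]=\alpha_n$, and only then take the product, so your proxies $\tilde X_n$ are genuinely independent under $P'$ with means tending to $\alpha$, and Kolmogorov's theorem applies directly. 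The paper instead forms the two product measures $P^0$ (all lower extensions) and $P^1$ (all upper) and sets $P'=(1-a)P^0+aP^1$ --- a mixture of products rather than a product of mixtures. Under that mixture the $Y_n$ are not independent; indeed $T_n/n \to \gamma = E_*[X_1]$ holds $P^0$-a.s.\ and $T_n/n \to \delta = E^*[X_1]$ holds $P^1$-a.s., so under the paper's $P'$ the averages converge to $\gamma$ with probability $1-a$ and to $\delta$ with probability $a$, not to $\alpha$ almost surely; the invocation of Kolmogorov's strong law under $P'$ is not licit as written when $0<a<1$ and $\gamma<\delta$. Your coordinatewise convex combination is precisely the repair, at no extra cost (the paper's pure $P^0$/$P^1$ products are still the natural tool for the divergence part, where no interpolation is needed). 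Two compressions in your write-up deserve flagging, though neither is structural: first, the reduction of the unbounded case ``by truncation'' is real work in the paper --- Borel--Cantelli applied to $B_n=\{|X_n^*|>n\}\cup\{|(X_n)_*|>n\}$ using $\sum_n P(|X_1|^*>n)\le E[|X_1|^*]$, plus the dominated-convergence argument that $E[(X_n')_*]\to E_*[X_1]$ and $E[(X_n')^*]\to E^*[X_1]$, which is what guarantees your target means $\alpha_n$ exist in the truncated intervals and converge to $\alpha$ --- so that step needs to be executed, not just cited. Second, in your divergence construction the extension placed on a coordinate $i$ in block $B_k$ must be built from the simple approximant $\Psi_i$ attached to that coordinate (retaining the $2^{-i}$ accuracy so the uniform bound $|S_n-\tilde S_n|\le 1$ persists), with only the target mean chosen by the parity of $k$; writing $Q_1^{(k)}$ there conflates the block index with the coordinate index.
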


\begin{proof}
Write $E_P[f]$ for the expectation of $f$ with respect to $P$, i.e., $\int_\Omega f(\omega)\, dP(\omega)$.
The variables $X_1,X_2,...$ are defined by $X_n(\omega_1,\omega_2,...) = \Psi(\omega_n)$ on our product space $\Omega$ for some
real-valued function $\Psi$.

Let $X_n' = X_n \cdot 1_{\{ |X_n| \le n \}}$.

Let $A_{n,\pm} = \{ \pm X_n > n \}$.  Let $A_n = \{ |X_n| > n \} = A_{n,+} \cup A_{n,-}$.
Observe that $A_{n,+} \subseteq \{ X_n^* > n \}$ and $A_{n,-} \subseteq \{ (X_n)_* < -n \}$.
Let $B_n = \{ |X_n^*| > n \} \cup \{ |(X_n)_*| > n \}$.  Clearly $A_n \subseteq B_n$.
Then:
\begin{equation}\label{eq:sum-Bn}
   \sum_{n=1}^\infty P(B_n) \le \sum_{n=1}^\infty P(|(X_n)_*|>n) + \sum_{n=1}^\infty P(|X_n^*|>n) < \infty,
\end{equation}
since $\sum_{n=1}^\infty P(|X|>n) \le \int_0^\infty P(|X|>t) \, dt = E[|X|]$, and since both $(X_n)_*$ and $X_n^*$ have
finite expectations given that $|X_n|^*$ does.  In particular, almost surely, only finitely many of the
$B_n$ occur by Borel-Cantelli.  Since $A_n\subseteq B_n$, almost surely only finitely many of the $A_n$ occur.

I now claim that $E[(X_n')_*] \to E[(X_1)_*]$ and $E[(X_n')^*] \to E[X_1^*]$ as $n\to\infty$.  We only need to prove the latter
claim since the former follows by applying the latter to the iidpnmrv sequence $\{ -X_n \}$.  Now, outside of $B_n$, we have
$X_n' = X_n$, and since $B_n$ is measurable it follows from Lemma~\ref{lem:equals} that outside of $B_n$ we
have $(X_n')^* = X_n^*$ almost surely.  Moreover, everywhere on $B_n$ we have $X_n' = 0$ and hence $(X_n')^* = 0$ by Lemma~\ref{lem:equals}.
Thus:
\begin{align*}
|E[(X_n')^*] - E[X_1^*]| &= |E[(X_n')^*] - E[X_n^*]| \\
                         &\le E[|(X_n')^*-X_n^*|] \\
                         &\le E[|(X_n')^*-X_n^*| \cdot 1_{B_n}] \\
                         &= E[|X_n^*| \cdot 1_{B_n}] \\
                         &\le E[|X_n^*| \cdot 1_{ \{ |X_n|^* > n \} }] \\
                         &= E[|X_1^*| \cdot 1_{ \{ |X_1|^* > n \} }] \to 0,
\end{align*}
since $E[|X_1|^*] < \infty$.

Let $\Psi'_n = \Psi \cdot 1_{ \{ |\Psi| \le n \}}$ so that $X_n'(\omega_1,\omega_2,...) = \Psi'_n(\omega_n)$.  Choose a simple function $\Psi_{1,n}$ such that both $|\Psi_{1,n}-\Psi'_n|\le 1/n$ and $|\Psi_{1,n}|\le n$ everywhere
on $\Omega_1$.

By Lemma~\ref{lem:ext-f} there is an extension $P_{1,n,0}$ of $P_1$ such that $\Psi_{1,n}$ is $P_{1,n,0}$-measurable and
$P_{1,n,0}$-almost surely $\Psi_{1,n} = (\Psi_{1,n})_*$, and an extension $P_{1,n,1}$ of $P_1$ such that $\Psi_{1,n}$ is $P_{1,n,1}$-measurable and
$P_{1,n,1}$-almost surely $\Psi_{1,n} = \Psi_{1,n}^*$.  Let $\scr F_{1,n}$ be the $\sigma$-field on $\Omega_1$ generated by $\scr F_1$ and
$\Psi_{1,n}$.

For $i=0,1$, let $P^i$ be the product of the measures $P_{1,1,i}, P_{1,2,i}, P_{1,3,i}, ...$.  This is an extension of $P$.

Let $Y_n(\omega_1,\omega_2,...) = \Psi_{1,n}(\omega_n)$.  By Proposition~\ref{prop:product}, $(Y_n)_*(\omega_1,\omega_2,...) = (\Psi_{1,n})_*(\omega_n)$ for $P$-almost all $(\omega_1,\omega_2,...)$.  Then $P^0$-almost surely we have $Y_n = (Y_n)_*$ and $P^1$-almost surely $Y_n = Y_n^*$ (where $(Y_n)_*$ and $Y_n^*$ are
defined with respect to $P$).
Define the measure $P' = (1-a)P^0 + aP^1$, where $a$ is such that $\alpha = (1-a)E_P[(X_1)_*] + aE_P[X_1^*]$.
Then $E_{P'}[Y_n] = (1-a)E_P[(Y_n)_*]+aE_P[Y_n^*]$.
Now $|(Y_n)_*-(X_n')_*| \le 1/n$ and $|Y_n^*-(X_n')^*| \le 1/n$ everywhere by Lemma~\ref{lem:approx} and the choice of $\Psi_{1,n}$, so by uniform convergence we have
$E[(Y_n)_*-(X_n')_*]$ and $E[Y_n^*-(X_n')^*]$ converging to zero.  Moreover, we have already seen that
$E[(X_n')_*]\to E[(X_1)_*]$ and $E[(X_n')^*]\to E[X_1^*]$, so $E_P[(Y_n)_*] \to E_P[(X_1)_*]$ and $E_P[Y_n^*] \to E_P[X_1^*]$.  Thus, $E_{P'}[Y_n] \to (1-a)E_P[(X_1)_*]+aE_P[X_1^*]=\alpha$.

Now $|Y_n-X_n'| \le 1/n$ everywhere and  $|X_n'| \le \max(|(X_n)_*|, |X_n^*|) \cdot 1_{B_n}$.  Thus:
\begin{align*}
 \sum_{n=1}^\infty &\frac{\Var_{P'}[Y_n]}{n^2} \le  \sum_{n=1}^\infty \frac{E[Y_n^2]}{n^2} \\
    &= \sum_{n=1}^\infty \frac{E_{P'}[((X_n'+(Y_n-X_n'))^2]}{n^2} \\
    &\le 2\sum_{n=1}^\infty \frac{E_{P'}[(X_n')^2]}{n^2} + \sum_{n=1}^\infty \frac{2}{n^3} \\
    &= 2\sum_{n=1}^\infty \frac{E_{P'}[(X_n)_*)^2\cdot 1_{B_n}]}{n^2} + 2\sum_{n=1}^\infty \frac{E_{P'}[(X_n^*)^2\cdot 1_{B_n}]}{n^2} + \sum_{n=1}^\infty \frac{2}{n^3} \\
    &= 2\sum_{n=1}^\infty \frac{E_{P}[((X_n)_*)^2\cdot 1_{B_n}]}{n^2} + 2\sum_{n=1}^\infty \frac{E_{P}[(X_n^*)^2\cdot 1_{B_n}]}{n^2} + O(1) \\
    &\le 2\sum_{n=1}^\infty \frac{E_{P}[((X_n)_*)^2\cdot 1_{\{ |(X_n)_*| > n\}}]}{n^2} + 2\sum_{n=1}^\infty \frac{E_{P}[(X_n^*)^2\cdot 1_{\{ |X_n^*| > n\}}]}{n^2} 
         + O(1),
\end{align*}
where the last equality follows from the fact that $P$ is an extension of $P'$ and $(X_n)_*$ and $X_n^*$ are $P$-measurable.
The finiteness of the right hand side then follows from the proof of \cite[Lemma~2.4.3]{Durrett}.
It then follows from Kolmogorov's Strong Law~\cite[Theorem 5.8]{MW} that $P'$-almost
surely $(Y_1+\cdots+Y_n - E[Y_1+\cdots+Y_n])\to 0$ and hence $(Y_1+\cdots+Y_n)/n \to \alpha$.

Let $T_n = Y_1+\cdots+Y_n$.
Let $S_n' = X_1'+\cdots+X_n'$.
Since $|X_n'-Y_n| \le 1/n$ everywhere, $|S_n'/n - T_n/n| \le (1/n)(1+1/2+\cdots+1/n)$
and the right hand side converges to zero.  Moreover, $P$-almost surely for all but finitely many $n$ we have $X_n = X_n'$, so
that $P$-almost surely $(S_n-S_n')/n \to 0$.  Thus, $P$-almost surely we have $(S_n-T_n)/n \to 0$.  But $P'$ extends $P$, so
this also holds $P'$-almost surely.  But since $P'$-almost surely $T_n/n \to \alpha$, we also have $S_n/n$ converging $P'$-almost surely to $\alpha$.

To construct $P''$, first recall that $E_P[(Y_n)_*] \to E_P[(X_1)_*]$ and $E_P[Y_n^*] \to E_P[X_1^*]$.
Let $\gamma = E_P[(X_1)_*]$ and $\delta = E_P[X_1^*]$.
For $n>0$, let $a_n$ be a strictly increasing sequence of positive integers such that (a)~$|E_P[(Y_k)_*] - E_P[(X_1)_*]|<1/n$ and
$|E_P[Y_k^*] - E_P[X_1^*]| < 1/n$ for all $k \ge a_n$, and (b)~$a_{n+1}/a_n \to \infty$.
Let $a_0 = 1$.

Let $L_n = \{ a_{n-1}+1,a_{n-1}+2,...,a_n \}$.  Let $L = L_1 \cup L_3 \cup L_5 \cup ...$.
For $n \in L$, let $\alpha_n = E_P[(Y_n)_*]$ and for $n\notin L$, let $\alpha_n = E_P[Y_n^*]$.
Let $\beta_n = \alpha_1+\cdots+\alpha_n$.
I now claim that $\beta_{a_{2n}} / a_{2n} \to \delta$ and $\beta_{a_{2n+1}} / a_{2n+1} \to \gamma$ as $n\to\infty$.
For, since both $E_P[(Y_n)_*]$ and $E_P[Y_n^*]$ converge, there is an $M<\infty$ such that $|\alpha_k|\le M$ for all $K$, and then by the choice of $a_n$:
\begin{align*}
    \beta_{a_{2n}} - a_{2n}\delta &= \sum_{k={a_{2n-1}+1}}^{a_{2n}} (E_P[Y_k^*]-\delta) - a_{2n-1} \delta + \sum_{k=1}^{a_{2n-1}} \alpha_k \\
        &= O(a_{2n}/(2n-1)) + O(a_{2n-1}).
\end{align*}
Since $a_{2n-1}/a_{2n} \to 0$ by condition (b) above, we see that $\beta_{a_{2n}}/a_{2n}-\delta$ converges to $0$ as desired.
Likewise:
\begin{align*}
    \beta_{a_{2n+1}} - a_{2n+1}\gamma &= \sum_{k={a_{2n}+1}}^{a_{2n+1}} (E_P[(Y_k)_*]-\gamma) - a_{2n} \gamma + \sum_{k=1}^{a_{2n}} \alpha_k \\
        &= O(a_{2n+1}/2n) + O(a_{2n}),
\end{align*}
and so $\beta_{a_{2n+1}} / a_{2n+1} \to \gamma$.

Now define $P_{1,n}''$ to be equal to $P_{1,n,0}$ if $n\in L$, and to $P_{1,n,1}$ if $n\notin L$.  Let $P''$ be the product of the
measures $P_{1,1}'',P_{1,2}'',...$.

In exactly the same way as we proved above using Kolmogorov's Strong Law that $P'$-almost surely $(T_n-E_{P'}[T_n])$, we can also
show that $P''$-almost surely $(T_n-E_{P''}[T_n])/n \to 0$.
But $E_{P''}[T_n] = \beta_n$ since $E_{P''}[Y_n] = E_P[(Y_n)_*]$ if $n\in L$ and $E_{P''}[Y_n] = E_P[Y_n^*]$ otherwise.
Thus, $P''$-almost surely $T_{a_{2n}}/a_{2n} \to \delta$ and $T_{a_{2n+1}}/a_{2n+1} \to \gamma$.

But we have already seen that $(S_n/n - T_n)/n$ converges to zero $P$-almost surely, and hence also $P''$-almost surely.
Thus $P''$-almost surely $S_{a_{2n}}/a_{2n} \to \delta$ and $S_{a_{2n+1}}/a_{2n+1} \to \gamma$.  Since $\delta>\gamma$, our desired
divergence result follows.
\end{proof}

\begin{proof}[Proof of Theorem~\ref{th:main}]
Choose any $a_1 \in A$.
By Lemma~\ref{lem:conv}, we have an extension $P'$ of $P$ such that $P'$-almost surely $S_n/n$ converges to $a_1$.
Moreover, since $A$ is a proper non-empty subset of $[E_P[(X_1)_*],E_P[X_1^*]]$, the latter interval must contain at least
two points and hence $E_P[(X_1)_*] < E_P[X_1^*]$, so by the same lemma there is an extension $P''$ of $P$ such that $P''$-almost surely
$S_n/n$ diverges.  Now choose any $a_2 \in [E_P[(X_1)_*],E_P[X_1^*]]-A$.  Again, by the same lemma there is an extension $P'''$ of
$P$ such that $P'''$-almost surely $S_n/n$ converges to $a_2$.

All the events described in (i)-(v) will happen whenever $S_n/n \to a_1$, and so they all have $P'$-measure $1$.
Events (i), (ii), (iii) and (v) cannot happen when $S_n/n \to a_2$, and so they all have $P'''$-measure $0$.  And
event (iv) as $P''$-measure $0$.  Thus, each event has measure $1$ under one extension of $P$ and measure $0$ under
some other extension, and so by Lemma~\ref{lem:strategy}, each event is maximally $P$-nonmeasurable.
\end{proof}

\begin{proof}[Proof of Theorem~\ref{th:weak}]
Let $a$ and $\e$ be as in the statement of the theorem.  The conditions of the theorem guarantee that there is a point $b \in [E_*[X_1], E^*[X_1]]-[a-\e,a+\e]$.
Let $\gamma = |b-a|-\e$.  Since $b \notin [a-\e,a+\e]$, we have $\gamma>0$.

By Lemma~\ref{lem:conv}, let $P'$ be an extension of $P$ such that $P'$-almost surely $S_n/n$ converges to $a$, and
let $P''$ be an extension of $P$ such that $P''$-almost surely $S_n/n$ converges to $b$.
Then $S_n/n$ converges to $a$ in $P'$-probability and to $b$ in $P''$-probability.  Hence
$\lim_{n\to\infty} P'(|S_n/n - a|>\e) = 0$.  By Lemma~\ref{lem:strategy}, we have $\lim_{n\to\infty} P_*(|S_n/n - a|>\e) = 0$.
Moreover, $0 = \lim_{n\to\infty} P''(|S_n/n - b|>\gamma) \ge \limsup_{n\to\infty} P''(|S_n/n - a|\le \e)$ by choice of $\gamma$.
Thus $P''(|S_n/n - a| \le \e)$ converges to 0, and so $P''(|S_n/n - a| > \e)$ converges to $1$, so that $P^*(|S_n/n - a| > \e)$ also
converges to $1$.
\end{proof}

\section*{Acknowledgments}
The author would like to thank Arthur Paul Pedersen for a number of discussions and an anonymous referee for a number of helpful suggestions.

\end{document}